\newif\ifextended
\newtheorem{teor}{Theorem}[section]
\theoremstyle{definition}
\newtheorem{Definition}[teor]{Definition}
\theoremstyle{remark}
\newtheorem{rk}[teor]{Remark}
\long\def\elimina#1{}
\def\R{\mathbb{R}}
\def\C{\mathbb{C}}
\def\N{\mathbb{N}}
\def\M{\mathcal{M}_b(\Omega;\R^{3\times 3})}
\def\Rtt{\R^{3\times 3}}
\def\Ltm{L^{3/2}(\Omega,\Rtt)}
\def\bmu{\beta^\mu}
\def\e{\varepsilon}
\def\ue{u_{h}}
\def\ve{v_{h}}
\def\Ce{\C_{\varepsilon}}
\def\Chom{\widehat{\C}}
\def\tens{a_{ij}^{hk}}
\def\ind{\{1,2,3\}}
\def\E{\mathcal{E}}
\def\S{\mathcal{S}}
\def\A{\mathcal{A}}
\def\Rort{\mathcal{R}^\perp}
\def\X{\mathcal{X}}
\def\Mant{\overline{H}^a}
 \DeclareMathOperator{\dive}{div}
 \DeclareMathOperator{\curl}{curl}
\begin{document}
\title[Linear elasticity problems with incompatible deformation 
fields]%
{Duality arguments for linear elasticity problems with incompatible deformation 
fields} 
\author{{Adriana Garroni}\and {Annalisa Malusa}} 
\address{Dipartimento di Matematica ``G.\ Castelnuovo'', Sapienza Università di 
Roma\\ P.le Aldo Moro 2 -- 00185 Roma (Italy)}
\email{{garroni@mat.uniroma1.it} \and {malusa@mat.uniroma1.it}}
\date{May 1, 2020}
\keywords{Boundary value problems for elliptic systems, elastic problems, 
duality solutions, homogenization,}
\subjclass[2020]{Primary 35D30, Secondary 35J58}
\date{\today}

\begin{abstract}
We prove existence and uniqueness for solutions to equilibrium problems for 
free--standing, traction--free, non homogeneous crystals in the presence of 
plastic slips. 
Moreover we prove that this class of problems is closed under $G$-convergence 
of the operators. In particular the homogenization procedure, valid for 
elliptic systems in linear elasticity, depicts the macroscopic features of 
a composite material in the presence of plastic deformation.
\end{abstract}

\maketitle

\begin{center}
\textit{To Umberto Mosco, with gratitude and appreciation  \\
for having us
revealed 
first the beauty of the Calculus of Variation}
\end{center}

\section{Introduction}
In this paper we consider linear boundary value problems for systems of the form
\begin{equation}\label{f:intrpb}
\begin{cases}
-\dive (\C(x) \beta)=0, & \text{in}\ \Omega, \\
\curl \beta =\mu, & \text{in}\ \Omega, \\
\C(x)\beta \cdot n =0, & \text{on}\ \partial\Omega, \\
\end{cases}
\end{equation}
where  $\Omega$ is a bounded, smooth, open subset of $\R^3$, $\C(x)$ is a 
symmetric tensor--valued function in $\Omega$ with VMO coefficients, satisfying 
the standard 
hypotheses of elasticity theory (see Definition \ref{d:ellip}), and $\mu$ is a 
matrix--valued bounded Radon measure in $\Omega$.

In  linear elasto-plastic models $\C(x)$ is the elastic tensor for
a non homogeneous elastic body and the gradient of the displacement field is decomposed in plastic and elastic strain. The field $\beta \in L^1(\Omega; \Rtt)$ in \eqref{f:intrpb} represents the elastic strain which, in the presence of a non trivial plastic deformation (possibly due to non homogenous plastic slips), may be non compatible, i.e., it may not be curl free. 
The incompatibility is the effective result of a distribution of crystals defects (the dislocations) and the measure $\mu$ represents the dislocation density.
Dislocations are topological defects in the crystalline structure that,
at a mesoscopic level, can be identified with loops along which the strain has 
a singularity. In particular the strain field is curl free outside the loops 
and has a non trivial circulation around the lines. Therefore these 
singularities are nicely described by measures supported on $1$--rectifiable 
closed curves with  matrix valued multiplicities that depend on the underlined 
crystalline structure and the orientation of the line, i.e., $\curl\beta = 
b\otimes \tau \mathcal{H}^1 |\gamma$ (see e.g.
\cite{CGO}, Section 2.2., and the references therein, for a detailed 
description of the kinematics of plastic deformations). The topological nature 
of these defects is transparent in the fact that the line $\gamma$ is a 
collection of closed curves with constant multiplicity $b$ (the Burgers vector) 
and it is translated  in the constraint $\dive\mu=0$. The collective effect of 
dislocations produces an effective strain field $\beta$ whose curl is given by 
an arbitrary Radon measure as in \eqref{f:intrpb}. 

We then prove that for every $\mu$ such that $\dive \mu=0$, there exists a unique 
distributional solution 
$\beta \in 
L^{3/2}(\Omega,\R^{3\times 3})$ to the system \eqref{f:intrpb}. 

Following \cite{CGO} the result is obtained by decoupling the problem and finding the solution in 
the form $\beta=\bmu+Du$, where $\curl \bmu=\mu$ and $u$ is the solution of the 
elliptic system
\begin{equation}\label{f:ellintr}
\begin{cases}
-\dive (\C D u)=\dive (\C \beta^\mu), \quad & \text{in}\ \Omega, \\
\C Du \cdot n =-\C\beta^\mu \cdot n, & \text{on}\ \partial\Omega.
\end{cases}
\end{equation}

The existence of $\bmu \in L^{3/2}(\Omega,\R^{3\times 3})$ is guaranteed by 
the celebrated result by Bourgain and Brezis \cite{BrBu}, while the existence 
and uniqueness (up to rigid infinitesimal rotations, i.e., antisymmetric matrices) of a distributional solution $u \in W^{1,3/2}(\Omega;\R^3)$ to
the non variational problem \eqref{f:ellintr} is obtained by adapting the 
method of duality solutions for elliptic equations with measure forcing terms 
(see \cite{St}, \cite{DMOP}, and, e.g., \cite{Bo1}, \cite{Bo2}, \cite{MO}, 
\cite{OP}, \cite{Shen}).

In the second part of the paper we deal with the asymptotic behavior, as $h \to 
0$, of the solution $\beta_h$ of the problems
\begin{equation}\label{f:intrh}
\begin{cases}
-\dive (\C_h(x) \beta_h)=0, & \text{in}\ \Omega, \\
\curl \beta_h =\mu, & \text{in}\ \Omega, \\
\C_h(x)\beta_h \cdot n =0, & \text{on}\ \partial\Omega. \\
\end{cases}
\end{equation}
Assuming that the linear elliptic operators associated to the coefficients 
$\C_h$ $G$-converge to the operator associated to $\C_0$, under suitable 
uniform 
conditions for $\C_h$, we prove the weak  convergence in  
$L^{3/2}(\Omega,\R^{3\times 3})$  of $\beta_h$ to the unique (up to rigid infinitesimal rotations) solution $\beta_0$
to the problem
\begin{equation}\label{f:intrlim}
\begin{cases}
-\dive (\C_0(x) \beta_0)=0, & \text{in}\ \Omega, \\
\curl \beta_0 =\mu, & \text{in}\ \Omega, \\
\C_0(x)\beta_0 \cdot n =0, & \text{on}\ \partial\Omega. \\
\end{cases}
\end{equation}
This result is also obtained by decoupling 
the boundary value problems and by investigating the asymptotic behaviour of the 
duality solutions of elliptic systems with non regular forcing terms under the 
assumption of $G$-convergence of the differential operators.
We conclude by discussing the special case of the homogenization.

Finally we remark that the duality arguments and therefore the regularity properties assumed for the elastic tensor $\C$ are needed in order to deal with the cases in which the curl of the strain $\beta$ is assigned to be singular. This is the case when the plastic strain is concentrated on low dimensional sets in $\R^3$. In particular in the presence of single dislocations, when the measure $\mu$ is concentrated on 1-rectifiable lines, the field $\beta$ is not in $L^2$. On the other hand if $\mu\in H^{-1}(\Omega,\Rtt)$ the duality argument is not necessary, the problem is variational and it can be studied minimizing the corresponding elastic energy and the asymptotics can be obtained via $\Gamma$-convergence.

\section{Notations and basic hypotheses on the operators}\label{s:not}
In what follows $\Omega \subseteq \R^3$ will be a open, bounded, simply 
connected set with $C^1$ boundary, and $\M$ will denote the set 
of all bounded matrix--valued Radon measures on $\Omega$.

The subspaces of $\Rtt$ of all symmetric matrices and of all skew--symmetric 
matrices will be denoted by $\S$ and $\A$, respectively.

If $\C=(\tens)$, $i,j,h,k\in\ind$, is a fourth--order tensor, and 
$\xi=(\xi_{ij})$, $\eta=(\eta_{ij})$ $i,j\in \ind$
are square matrices of order $3$, we set  
\[
\C\xi =\left(\sum_{h,k=1}^3\tens \xi_{hk}\right)_{i,j\in \ind} \quad 
\C\xi \cdot \eta  = \sum_{i, j,h,k=1}^3\tens \xi_{hk}\eta_{ij} \qquad 
\|\xi\|= \left(\sum_{i,j=1}^3|\xi_{ij}|^2 \right)^{\frac{1}{2}}
\]

\begin{Definition}\label{d:ellip}
Given $c_0,c_1>0$, we denote by $\E(c_0,c_1,\Omega)$ the set of all 
tensor--valued functions $\C(x)=(\tens(x))$, $x\in\Omega$,  such that the 
following 
hold:
\begin{enumerate}
\item $\tens\in L^\infty(\Omega)$ for all $i,j,h,k\in\ind$;
\item $a_{ij}^{hk}=a_{ji}^{hk}=a_{hk}^{ij}$ for all $i,j,h,k\in\ind$;
\item $c_0 \|\xi+\xi^T\|^2\leq \C(x) \xi \cdot \xi \leq c_1 \|\xi+\xi^T\|^2$ for a.e.\ 
$x\in\Omega$, and 
for every $\xi\in \Rtt$.
\end{enumerate}

\end{Definition}

\begin{rk}
The symmetry assumption in Definition \ref{d:ellip} implies that $\C\xi=0$
for every $\xi \in \A$. 
On the other hand, if $\C\in \E(c_0,c_1,\Omega)$, $\C(x) \xi \cdot \xi$ is a positive 
definite continuous quadratic form on $\S$.  
\end{rk}

For a matrix valued distribution $V\colon \Omega \to \Rtt$, $\dive V$ and 
$\curl V$ denote the row--wise 
distributional divergence and curl of $V$ respectively.
In particular, given a tensor--valued function $\C\colon \Omega \to 
\R^{(3\times3)^2}$ and
a matrix--valued function $\beta\colon \Omega \to \Rtt$, the vector $\dive 
(A(x)\beta(x))\colon \Omega \to \R^3$ has components
\[
(\dive (\C\beta))_i=
\sum_{j=1}^3 \frac{\partial}{\partial x_j}\left(\sum_{h,k=1}^3 \tens \beta_{hk} 
\right),
\qquad i\in\ind.
\]

\section{Preliminary results on elliptic systems}\label{s:known}
In this section we recall the basic existence and regularity
results concerning boundary value problems for elliptic systems of PDEs
of the form
\begin{equation}\label{f:elliptic}
\begin{cases}
-\dive (\C(x) D v)=\dive G, & \text{in}\ \Omega, \\
\C Dv \cdot n =-G\cdot n, & \text{on}\ \partial\Omega. \\
\end{cases}
\end{equation}
For $G\in L^2(\Omega, \Rtt)$  we deal with 
variational solutions in $W^{1,2}(\Omega,\R^3)$. Precisely a function  $v$ is a weak 
solution of \eqref{f:elliptic} if $v\in W^{1,2}(\Omega,\R^3)$, and
\[
\int_{\Omega}\C Dv \cdot D\varphi\, dx = 
-\int_{\Omega} G \cdot D\varphi\, dx, \qquad 
\forall \varphi\in W^{1,2}(\Omega;\R^3).
\]
Choosing as test function the rigid movement  $\varphi(x)=\xi x+b$, with   
$b\in\R^n$ and $\xi\in\A$, we obtain
\[
\int_{\Omega}\C Dv \cdot \xi\, dx = 
-\int_{\Omega} G \cdot\xi\, dx.
\]  
If we assume that the coefficients $\C$ belong to the class $\E(c_0,c_1,\Omega)$, 
we have that
\[
\int_{\Omega}\C Dv \cdot \xi\, dx = \int_{\Omega}\C \xi\cdot Dv\, dx =0
\] 
so that the existence of a weak solution $v$ to 
\eqref{f:elliptic} implies that
\begin{equation}\label{f:compcond}
\int_{\Omega}G \, dx\in \S.
\end{equation}

The compatibility condition \eqref{f:compcond} on the forcing term $G$
must be required and the solutions of \eqref{f:elliptic} will be defined up to
additive  rigid transformations belonging to the set
\[
\mathcal{R}=\{\varphi(x)=\xi x+b,\quad  b\in\R^n,\ \xi\in\A\}.
\]

In what follows  $\X_p$, $p>1$, will denote the set of admissible forcing terms 
in $L^p$
\[
\X_p=\{G\in L^p(\Omega,\Rtt) \ \text{such that \eqref{f:compcond} holds true}\},
\]
and,  with a little abuse of notation, $\Rort$ will denote the following set
\begin{equation}\label{p-orth}
\Rort:=\{u \in W^{1,1}(\Omega,\R^3): \ \int_\Omega u \,dx =0,\  \int_\Omega Du\,dx \in \S\}.
\end{equation}
 
The  existence result below for \eqref{f:elliptic}  is based on the second Korn inequality (see, \cite{OSY}, Theorem 
2.5)
\begin{equation}\label{f:korn}
\int_{\Omega} |u|^2\,  dx + \int_{\Omega} |Du|^2\,  dx \leq C 
\int_{\Omega}|Du+(Du)^T|^2\, dx,
\qquad \forall u \in W^{1,2}(\Omega,\R^3)\cap \Rort,
\end{equation}
and Lax--Milgram Theorem (see e.g.\cite{Shen}, Theorem 1.4.4).

\begin{teor}
Suppose that $\Omega$ is a bounded Lipschitz domain in $\R^3$, and $\C\in 
\E(c_0,c_1,\Omega)$. 
Then for every $G\in \X_2$ there exists a 
weak solution $v\in 
H^1(\Omega,\R^3)$ to problem \eqref{f:elliptic},
unique up to rigid displacements in $\mathcal{R}$, i.e. unique in $\Rort$.
\end{teor}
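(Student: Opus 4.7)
The plan is to apply the Lax--Milgram theorem on the closed subspace $V := W^{1,2}(\Omega, \R^3) \cap \Rort$ of $W^{1,2}(\Omega, \R^3)$, with the bilinear form
\[
a(u, \varphi) := \int_\Omega \C(x) Du \cdot D\varphi \, dx
\]
and the linear functional
\[
\ell(\varphi) := -\int_\Omega G \cdot D\varphi \, dx,
\]
first producing $v \in V$ with $a(v, \varphi) = \ell(\varphi)$ for every $\varphi \in V$, and then extending the identity to arbitrary test functions in $W^{1,2}(\Omega, \R^3)$.

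Continuity of $a$ will follow from the $L^\infty$ bound in Definition \ref{d:ellip}(1) together with Cauchy--Schwarz, and continuity of $\ell$ is immediate from $G \in L^2$. For coercivity, the lower bound in Definition \ref{d:ellip}(3) gives
\[
a(u,u) \ge c_0 \int_\Omega \|Du + (Du)^T\|^2 \, dx,
\]
and the second Korn inequality \eqref{f:korn}, which is stated exactly for functions in $V$, bounds the right--hand side from below by a constant multiple of $\|u\|_{W^{1,2}}^2$. This delivers a unique $v \in V$ by Lax--Milgram.

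To extend the equation to an arbitrary $\varphi \in W^{1,2}(\Omega, \R^3)$, I would decompose $\varphi = \tilde\varphi + r$ with $r(x) = \xi x + b \in \mathcal{R}$ and $\tilde\varphi \in V$, by choosing $\xi$ as the skew--symmetric part of $|\Omega|^{-1}\int_\Omega D\varphi \, dx$ and then $b$ so that $\tilde\varphi$ has zero mean. Both sides of the equation vanish on the rigid piece: $a(v,r) = 0$ because $Dr = \xi \in \A$ and $\C\xi = 0$ by the symmetry of $\C$, while $\ell(r) = -\xi \cdot \int_\Omega G \, dx = 0$ by the compatibility condition \eqref{f:compcond} combined with the orthogonality of $\A$ and $\S$ with respect to the Frobenius inner product. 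This is exactly the step where the hypothesis $G \in \X_2$ is used.

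For uniqueness up to $\mathcal{R}$, I would set $w := v_1 - v_2$ for any two weak solutions, so that $a(w,\varphi) = 0$ for every $\varphi \in W^{1,2}$; choosing $\varphi = w$ and invoking Definition \ref{d:ellip}(3) forces $Dw + (Dw)^T = 0$ almost everywhere, whence $w \in \mathcal{R}$ by the classical characterization of infinitesimal rigid motions. The one step requiring care is the algebraic bookkeeping showing that the compatibility condition $\int_\Omega G \in \S$ is precisely what is needed to make the problem well--posed on the complement of $\mathcal{R}$; the analytic heart of the argument, coercivity, is essentially handed to us by the Korn inequality \eqref{f:korn}.
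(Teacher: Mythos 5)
Your argument is correct and is exactly the route the paper indicates (it gives no written proof, only the remark that the result follows from the second Korn inequality \eqref{f:korn} and Lax--Milgram): coercivity on $W^{1,2}(\Omega,\R^3)\cap\Rort$ via Korn, extension of the test-function class by splitting off a rigid motion and using $\C\xi=0$ for $\xi\in\A$ together with the compatibility condition \eqref{f:compcond}, and uniqueness from the vanishing of the symmetric gradient. The details you supply, including the identification of where $G\in\X_2$ is used, are all sound.
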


In what follows we will need a  $W^{1,p}$ estimate for the weak solution to
\eqref{f:elliptic} with forcing term in  $L^p$, $p>2$. The higher summability 
of the solution, valid for operators with constant coefficients, fails to be 
true for general elliptic systems (see, e.g., \cite{ADN}).

Hence, from now on, we assume in addition that the coefficients belong to 
$VMO$, that is, setting
\[
\omega_\Omega(\C,r):= \sup_{B_\rho\subseteq \Omega,\ \rho\leq r} 
\frac{1}{|B_\rho|}\int_{B_\rho} \left| \C(s)- \frac{1}{|B_\rho|}
\int_{B_\rho} \C(t)\, dt \right|\, ds
\] 
we assume that 
\begin{equation}\label{f:VMO}
\lim_{r \to 0^+}\omega_\Omega(\C,r)=0.
\end{equation}

\begin{teor}\label{t:reg}
Assume that $\Omega$ is a bounded $C^1$ domain in $\R^3$, $p\geq 2$, 
and $\C\in 
\E(c_0,c_1, \Omega)$ such that \eqref{f:VMO} holds. 
Then for every $G\in \X_p$ there exists a unique weak solution $v$ to 
\eqref{f:elliptic} 
in  $W^{1,p}(\Omega;\R^3)\cap \Rort$  which  satisfies
\[
\int_{\Omega}\C Dv \cdot D\varphi\, dx = 
-\int_{\Omega} G \cdot D\varphi\, dx, \qquad 
\forall \varphi\in W^{1,p'}(\Omega;\R^3).
\]
Moreover, there exists 
a constant $C>0$, depending only on $p, c_0,c_1$ $\Omega$, and 
$\omega_\Omega(\C,r)$,  such that 
\begin{equation}\label{f:regest}
\|v\|_{W^{1,p}(\Omega,\R^3)} \leq C \|G\|_{L^P(\Rtt)}.
\end{equation}
\end{teor}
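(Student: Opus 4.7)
The plan is to bootstrap from the $p=2$ case, already established in the previous theorem, to the whole range $p\geq 2$ by invoking the $L^p$ regularity theory for elliptic systems in divergence form with VMO coefficients under Neumann--type boundary conditions on $C^1$ domains. Since $\Omega$ is bounded, every $G\in\X_p$ with $p\geq 2$ belongs to $\X_2$, so the previous theorem immediately produces a unique $v\in H^1(\Omega,\R^3)\cap\Rort$ which is a weak solution of \eqref{f:elliptic} tested against $W^{1,2}$ functions. The two remaining tasks are (a) to upgrade this $H^1$ solution to an element of $W^{1,p}\cap\Rort$ satisfying \eqref{f:regest}, and (b) to enlarge the test class from $W^{1,2}$ to $W^{1,p'}$.

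Task (a) is the analytical core. Here I would quote the $L^p$ theory for second order elliptic systems with VMO coefficients under traction boundary conditions on $C^1$ domains (see \cite{Shen} and the references therein). The strategy behind that theory is: (i) freeze the coefficients on a small ball to reduce locally to the constant coefficient traction problem, for which sharp $L^p$ estimates are classical; (ii) control the perturbation through the VMO modulus $\omega_\Omega(\C,r)$, which by \eqref{f:VMO} becomes arbitrarily small for small $r$; (iii) flatten the boundary via the $C^1$ charts of $\partial\Omega$ to reduce the near--boundary analysis to the half--space case; (iv) conclude by a covering and absorption argument, using Korn's inequality \eqref{f:korn} on $\Rort$ to pass from the gradient estimate to a full $W^{1,p}$ norm. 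Applying the resulting regularity statement to the $H^1$ solution of Step 1 places it in $W^{1,p}\cap\Rort$ with \eqref{f:regest}.

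Task (b) is routine. For $p\geq 2$ one has $p'\leq 2$, hence $W^{1,2}\subset W^{1,p'}$, and $C^\infty(\overline{\Omega};\R^3)$ is dense in $W^{1,p'}(\Omega;\R^3)$. Since both sides of the weak formulation are continuous on $W^{1,p}\times W^{1,p'}$ by H\"older's inequality, the identity holding against $W^{1,2}$ test functions extends by density to all $\varphi\in W^{1,p'}$. Uniqueness in $\Rort$ is equally immediate: the difference of two solutions lies in $W^{1,p}\cap\Rort\subset H^1\cap\Rort$ and satisfies the homogeneous problem, hence vanishes by the $p=2$ uniqueness statement.

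The main obstacle is task (a). Without the VMO assumption, global $W^{1,p}$ regularity for $p>2$ fails for systems with merely bounded measurable coefficients, as recalled in the paper just before \eqref{f:VMO}, and the traction--type boundary condition on a curved $C^1$ domain requires a boundary analysis that is strictly harder than its Dirichlet counterpart. In this classical setting the result is nevertheless by now standard, and the paper imports it directly from \cite{Shen}.
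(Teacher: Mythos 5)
Your proposal is correct and follows essentially the same route as the paper, whose entire proof is a citation of Theorem 5.6.4 in \cite{Shen}: you likewise import the $W^{1,p}$ regularity for VMO coefficients on $C^1$ domains from that reference, and the additional steps you spell out (extending the test class by density and deducing uniqueness from the $p=2$ case) are routine and sound.
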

\begin{proof}
See \cite{Shen}, Theorem 5.6.4.
\end{proof}

\begin{rk}
In what follows, we will consider as the unique  weak solution to 
\eqref{f:elliptic} the one  orthogonal to the set of rigid transformations 
$\mathcal{R}$.
\end{rk}

\section{Existence}\label{s:exist}

This section is devoted to the proof of the following result.

\begin{teor}\label{t:ex}
Suppose that 
\begin{enumerate}
\item $\C\in \E(c_0,c_1,\Omega)$ satisfying \eqref{f:VMO};
\item $\mu \in \M$  with  $\dive \mu=0$.
\end{enumerate}  
Then there is a distributional solution $\beta \in 
L^{3/2}(\Omega,\R^{3\times 3})$ to the system
\begin{equation}\label{f:mainpb}
\begin{cases}
-\dive (\C(x) \beta)=0, & \text{in}\ \Omega, \\
\curl \beta =\mu, & \text{in}\ \Omega, \\
\C\beta \cdot n =0, & \text{on}\ \partial\Omega. \\
\end{cases}
\end{equation}
The solution is unique (up to an additive constant antisymmetric matrix), and 
there exists a constant $c>0$, depending only on $\Omega$, 
$\omega_\Omega(\C,r)$, and 
$c_0, c_1$,  such that
\begin{equation}\label{f:stimab}
\|\beta-\bar\beta^a\|_{\Ltm}\leq c |\mu|(\Omega),
\end{equation}
where $\bar\beta^a$ denotes the average of the antisymmetric part of $\beta$, i.e., $\bar\beta^a=\frac{1}{2|\Omega|}\int_\Omega (\beta-\beta^T)dx$.
\end{teor}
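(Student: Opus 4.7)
I follow the decoupling strategy from \cite{CGO}: write $\beta = \bmu + Du$, where $\bmu \in \Ltm$ is a Bourgain--Brezis primitive of $\mu$ (which exists precisely because $\dive \mu = 0$, with $\|\bmu\|_{\Ltm} \leq C|\mu|(\Omega)$), and $u$ solves the reduced Neumann system \eqref{f:ellintr} with forcing term $G := \C \bmu \in \Ltm$. The minor symmetry of $\C$ makes $\C \bmu$ pointwise $\S$-valued, so the compatibility condition $\int_\Omega G \, dx \in \S$ is automatic; the real difficulty is that $G$ sits only in $L^{3/2}$, below the variational threshold $L^2$, so Theorem \ref{t:reg} together with Lax--Milgram cannot be applied directly.

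\textbf{Duality construction.} Since $\C$ is self-adjoint by the major symmetry $a_{ij}^{hk} = a_{hk}^{ij}$, for every $F \in \X_3$ Theorem \ref{t:reg} produces the unique $\varphi_F \in W^{1,3}(\Omega;\R^3) \cap \Rort$ solving $-\dive(\C D \varphi_F) = \dive F$ with Neumann data $\C D \varphi_F \cdot n = -F \cdot n$, and $\|D \varphi_F\|_{L^3} \leq C \|F\|_{L^3}$. I consider the linear functional $\Lambda_G: \X_3 \to \R$ defined by $\Lambda_G(F) := \int_\Omega G \cdot D \varphi_F \, dx$, bounded by $C \|G\|_{\Ltm} \|F\|_{L^3}$ via H\"older. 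Extending to all of $L^3(\Omega;\Rtt)$ by Hahn--Banach and invoking Riesz representation yields a matrix field $\eta \in \Ltm$ with $\|\eta\|_{\Ltm} \leq C\|G\|_{\Ltm}$, unique modulo the annihilator of $\X_3$, which one checks to be the $3$-dimensional space of constant $\A$-valued functions. Testing $\Lambda_G$ against curl-type fields $F = \curl H$ with $H \in C^\infty_c(\Omega;\Rtt)$, and using the self-adjointness of $\C$ together with integration by parts to rewrite $D\varphi_F$ against compactly supported tests, shows that $\curl \eta = 0$; since $\Omega$ is simply connected, $\eta = Du$ for some $u \in W^{1,3/2}(\Omega;\R^3) \cap \Rort$, and a parallel computation identifies the PDE and the Neumann boundary condition in the distributional sense.

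\textbf{Uniqueness, estimate, main obstacle.} Setting $\beta := \bmu + Du$ gives a distributional solution of \eqref{f:mainpb}, and the bound \eqref{f:stimab} follows by chaining $\|\bmu\|_{\Ltm} \leq C|\mu|(\Omega)$, $\|G\|_{\Ltm} \leq c_1 \|\bmu\|_{\Ltm}$ and $\|Du\|_{\Ltm} \leq C\|G\|_{\Ltm}$; the residual ambiguity captured by $\bar\beta^a$ matches exactly the $\A$-ambiguity in $\eta$. For uniqueness, if $\beta_1, \beta_2 \in \Ltm$ are two solutions then $\zeta := \beta_1 - \beta_2$ satisfies $\curl \zeta = 0$, $\dive(\C \zeta) = 0$, $\C \zeta \cdot n = 0$; simple connectedness gives $\zeta = Dv$ with $v \in W^{1,3/2}$, and integration by parts against $\varphi_F$ (using the Neumann conditions on both sides and the self-adjointness of $\C$) yields $\int_\Omega Dv \cdot F \, dx = 0$ for every $F \in \X_3$, forcing $Dv$ to be a constant antisymmetric matrix. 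The main obstacle I anticipate is the identification step: proving that the functional $\Lambda_G$ is represented by the weak gradient of a genuine $W^{1,3/2}$ function and that this function satisfies the Neumann problem in the correct distributional sense. This is the core Stampacchia-type duality analysis alluded to in \cite{St, DMOP}, adapted here to the elasticity system with VMO coefficients and $L^{3/2}$ forcing.
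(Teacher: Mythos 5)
Your proposal is correct, and it reaches the same decomposition $\beta=\bmu+Du$ resting on the same two pillars as the paper (the Bourgain--Brezis primitive for the incompatible part, and the $W^{1,3}$ regularity of Theorem \ref{t:reg} for the adjoint problems, which is exactly where the VMO hypothesis enters), but the construction of $u$ is genuinely different. The paper (Theorem \ref{p:dual}) approximates the $L^{3/2}$ forcing term by data $F_k\in\X_3$, solves the variational problems, derives a uniform $W^{1,3/2}$ bound by testing against the regular adjoint solutions, and extracts a weak limit; you instead realize the duality pairing $F\mapsto\int_\Omega G\cdot D\varphi_F\,dx$ directly as a bounded functional on $\X_3\subseteq L^3$, represent it by some $\eta\in\Ltm$ (unique modulo the annihilator of $\X_3$, correctly identified as the constant $\A$-valued fields, which are precisely gradients of rigid rotations), and then identify $\eta=Du$ by testing against $F=\curl H$, $H\in C^\infty_c$ --- for which the cleanest observation is that $\varphi_{\curl H}=0$ in $\Rort$, since $\curl H$ is $L^2$-orthogonal to every gradient --- and recover the Neumann problem by testing against $F=\C D\psi$, $\psi\in W^{1,3}$, for which $\varphi_F=-\psi$. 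What the paper's approximation route buys is the stability statement (item (2) of Theorem \ref{p:dual}), which is the engine of the $G$-convergence analysis in Section \ref{s:gconv}; your Riesz/Hahn--Banach route is more self-contained but does not directly yield that property. Conversely, your uniqueness argument --- writing the difference of two distributional solutions as $Dv$ by simple connectedness and pairing with $\varphi_F$ to get $\int_\Omega Dv\cdot F\,dx=0$ for all $F\in\X_3$ --- actually spells out what the paper compresses into ``follows from the linearity of the problem,'' and is the right justification for uniqueness in the full class of $L^{3/2}$ distributional solutions rather than merely among duality solutions.
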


The proof is based on a suitable decomposition $\beta=\bmu+Du$, with $\bmu$
such that  $\curl \bmu =\mu$, and 
$u$ weak solution of an elliptic problem.
The uniqueness then follows by the linearity of the problem.

Concerning the purely incompatible part of $\beta$, we use the following 
well--known result by Bourgain and Brezis 

\begin{teor}\label{p:betamu}
For every $\mu \in \M$ with 
$\dive 
\mu=0$ there exists a field $\beta^\mu \in \Ltm$ such that
\begin{enumerate}
\item $\curl \beta^\mu=\mu$,
\item $\|\beta^\mu\|_{\Ltm}\leq c |\mu|(\Omega)$.
\end{enumerate}
\end{teor}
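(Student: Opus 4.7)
The plan is to reduce the matrix-valued statement to the scalar (vector-valued) Bourgain--Brezis estimate applied row by row. By the conventions of Section~\ref{s:not}, both $\dive$ and $\curl$ act row-wise on matrix-valued distributions: writing $\mu = (\mu^{(1)},\mu^{(2)},\mu^{(3)})^T$ with $\mu^{(h)} \in \mathcal{M}_b(\Omega,\R^3)$, the hypothesis $\dive \mu = 0$ becomes $\dive \mu^{(h)}=0$ for each $h$, and $\curl \beta^\mu = \mu$ decouples into $\curl \beta^{\mu,(h)} = \mu^{(h)}$ row by row. Thus it suffices to prove the vector-valued statement: for every divergence-free $\nu \in \mathcal{M}_b(\Omega,\R^3)$, construct $\gamma \in L^{3/2}(\Omega,\R^3)$ with $\curl \gamma = \nu$ and $\|\gamma\|_{L^{3/2}} \leq c|\nu|(\Omega)$; stacking the three resulting rows then yields $\beta^\mu$ with the claimed bound, up to a dimensional constant.

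For the vector-valued problem the plan is to invoke the Bourgain--Brezis theorem~\cite{BrBu}. The key point is that a naive application of Calder\'on--Zygmund theory to the natural candidate $\gamma = -\curl(-\Delta)^{-1}\nu$ produces only a weak-$L^{3/2}$ bound linear in $|\nu|(\Omega)$; the upgrade to strong $L^{3/2}$ is a genuinely deep consequence of the divergence-free constraint, which cancels the borderline singular part of the resulting singular integral. Since the Bourgain--Brezis estimate is usually stated on $\mathbb{T}^3$ or all of $\R^3$, a short localization step will then be needed to transfer it to the bounded $C^1$ domain $\Omega$: one embeds $\bar\Omega$ in an open ball $B\subset\R^3$, extends $\nu$ to a divergence-free measure on $B$ by adding a corrector supported in $B\setminus\bar\Omega$ of total variation controlled by $|\nu|(\Omega)$ (possible via a Bogovskii-type construction absorbing the finite normal trace of $\nu$ on $\partial\Omega$), applies the estimate on $B$, and restricts the resulting field to $\Omega$.

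The hard part is contained in the Bourgain--Brezis estimate itself --- the fact that the critical borderline embedding can be ``closed'' on the divergence-free subspace, a phenomenon outside the reach of classical singular integral theory --- which I would take as input from~\cite{BrBu} rather than attempt to reprove. The remaining ingredients (row decomposition and domain localization) are routine and do not use any structural hypothesis on the elastic tensor $\C$, consistent with the fact that this existence statement is purely kinematic and decoupled from the equilibrium equation.
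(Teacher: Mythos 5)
The paper offers no proof of this statement beyond the citations to \cite{BrBu} and \cite{CGO}, and your proposal takes essentially the same route: the deep input is the Bourgain--Brezis $L^{3/2}$ bound for $\curl Y=\nu$ with $\nu$ a divergence--free $L^1$ field or measure, while the row--wise decoupling and the adaptation to the bounded domain $\Omega$ are exactly what the cited result in \cite{CGO} supplies. The only soft spot is your localization step --- the normal trace on $\partial\Omega$ of a divergence--free bounded measure is a priori only a distribution, not a finite measure, so producing a corrector in $B\setminus\overline\Omega$ with total variation $\lesssim |\mu|(\Omega)$ requires the push--forward/reflection argument for closed currents carried out in \cite{CGO} rather than a naive Bogovskii solve --- but this is precisely the part the paper itself delegates to the references.
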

\begin{proof}
See \cite{BrBu}, \cite{CGO}.
\end{proof}

Concerning the potential part $Du$ of $\beta$, we adapt to the problems of 
linear 
elasticity the 
method of duality solutions for elliptic equations with measure forcing terms 
(see \cite{St}, \cite{CGO}, \cite{Shen}), in order to obtain a selected 
distributional 
solution 
$u$ to  the non variational elliptic problem
\[
\begin{cases}
-\dive (\C(x) D u)=\dive (\C \bmu), & \text{in}\ \Omega, \\
\C Du \cdot n =-\C\bmu \cdot n, & \text{on}\ \partial\Omega, \\
\end{cases}
\]
with forcing term given by a field belonging to $L^{\frac32}(\Omega;R^3)$.

The starting point for the formulation by duality of elliptic problems is the 
following. Let $F$ and $G\in \X_2$, and let $w$, $v$ be the weak 
solutions to \eqref{f:elliptic} with datum $F$ and $G$, respectively. Choosing 
$w$ as test function in the equation solved by $v$ and conversely, thanks to 
the symmetry of the tensor $\C$ we obtain
\begin{equation}\label{f:dual2}
\int_{\Omega}G \cdot Dw \, dx = \int_{\Omega}F \cdot Dv \, dx.
\end{equation}
If, in addition, $G\in L^3(\Omega,\Rtt)$, then, by Theorem \ref{t:reg},
$v\in W^{1,3}(\Omega,\R^3)$, so that \eqref{f:dual2} is well defined when 
the forcing term $F$ belongs to $L^{3/2}(\Omega,\Rtt)$ and the corresponding
``solution'' $w$ belongs to $W^{1,3/2}(\Omega,\R^3)$. This fact inspires the 
following definition of weak solution for \eqref{f:elliptic} when the forcing 
term
is in $L^{3/2}(\Omega,\Rtt)$.

\begin{Definition}
Let $\C\in \E(c_0,c_1,\Omega)$, and  $F\in \X_{3/2}$.
A function $u$ is a duality solution to the elliptic problem
\[ 
\begin{cases}
-\dive (\C(x) D u)=\dive F, & \text{in}\ \Omega, \\
\C Dv \cdot n =-F\cdot n, & \text{on}\ \partial\Omega, \\
\end{cases} 
\]
if $u \in W^{1,3/2}(\Omega,\R^3)$, and
 
\begin{equation*}
\int_{\Omega}G \cdot Du \, dx = \int_{\Omega}F \cdot  Dv \, dx
\end{equation*}
for every $G\in \X_3$, where $v$ is the weak solution to 
\eqref{f:elliptic}. 
\end{Definition}

\begin{rk}\label{r:sub}
Given $u \in W^{1,3/2}(\Omega,\R^3)\cap\Rort$, and $H\in L^3(\Omega,\R^3)$,
and setting
\[
G=H-\Mant\qquad\hbox{with}\qquad \Mant= \frac{1}{|\Omega|}\int_{\Omega}\frac{H-H^T}{2},
\]
we have that $G\in \X_3$ and 
\[
\int_{\Omega}H Du\, dx= \int_{\Omega}G Du\, dx + \Mant \int_{\Omega}Du\, dx =
\int_{\Omega}G Du\, dx
\]
where in the last equality we have used the fact that $\Mant\in \A$, and 
$\int_{\Omega}Du\, dx\in \S$.
\end{rk}

The next result shows that the duality solution exists, is unique (up to rigid transformations in $\mathcal{R}$), and it is 
the unique solution in the sense of distributions which can be obtained as limit 
of variational solutions of the same problem. 

\begin{teor}\label{p:dual}
Let $\C \in \E(c_0,c_1, \Omega)$ satisfying \eqref{f:VMO}, and $F\in \X_{3/2}$.
Then there exists a unique function $u\in W^{1,3/2}(\Omega;\R^3)\cap \Rort$, such that
the following holds:
\begin{enumerate}
\item  $u$ is a duality solution to
\begin{equation}\label{f:distr}
\begin{cases}
-\dive (\C(x) D u)=\dive F, & \text{in}\ \Omega, \\
\C Du \cdot n =-F \cdot n, & \text{on}\ \partial\Omega;
\end{cases}
\end{equation}
\item $u$ is a solution obtained by approximation: for every sequence 
$(F_k)\subseteq \X_3$ converging to $F$
in $\Ltm$, the sequence $v_k$ of solutions in $W^{1,2}(\Omega, \R^3)\cap \Rort$,  to the problems
\begin{equation}\label{f:elliptick}
\begin{cases}
-\dive (\C(x) D v_k)=\dive F_k, & \text{in}\ \Omega, \\
\C Dv_k \cdot n =-F_k \cdot n, & \text{on}\ \partial\Omega, \\
\end{cases}
\end{equation} 

converges to $u$ in the weak topology of $W^{1,3/2}(\Omega;\R^3)$.

\item $u$ is a distributional solution to \eqref{f:distr},
and  there exists a constant $c>0$, depending only on $c_0, c_1$, $\Omega$ and
$\omega_\Omega(\C,r)$, such that
\begin{equation}\label{f:estw}
\| u\|_{W^{1,3/2}(\Omega;\R^3)} \leq c \|F\|_{\Ltm}.
\end{equation}
\end{enumerate}

\end{teor}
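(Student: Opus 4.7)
The plan is to construct the duality solution $u$ as a weak $W^{1,3/2}$ limit of the variational $W^{1,3}$ solutions $v_k$ of the regularized problems \eqref{f:elliptick}, where $F_k\in\X_3$ approximates $F$ in $\Ltm$. First I would verify that such an approximating sequence exists: starting from any sequence $\tilde F_k\in L^3(\Omega,\Rtt)$ converging to $F$ in $\Ltm$ (by density of smooth functions), subtract from each $\tilde F_k$ the antisymmetric part of its mean, a constant matrix that vanishes in the limit because $\int_\Omega F\in\S$. Theorem~\ref{t:reg} applied with $p=3$ then produces $v_k\in W^{1,3}(\Omega;\R^3)\cap\Rort$ satisfying the variational formulation of \eqref{f:elliptick}.

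The core step is a uniform bound $\|v_k\|_{W^{1,3/2}}\leq C\|F_k\|_{\Ltm}$, established by duality. For an arbitrary $G\in\X_3$, Theorem~\ref{t:reg} also provides a weak solution $w_G\in W^{1,3}(\Omega;\R^3)\cap\Rort$ of \eqref{f:elliptic} with forcing $G$ and with $\|Dw_G\|_{L^3}\leq c\|G\|_{L^3}$. Since both $v_k$ and $w_G$ lie in $W^{1,3}\subset W^{1,2}$, each can be tested in the weak formulation of the other; the symmetry of $\C$ then yields the identity
\[
\int_\Omega G\cdot Dv_k\,dx = \int_\Omega F_k\cdot Dw_G\,dx.
\]
To extract an $\Ltm$ bound on $Dv_k$, I take any $H\in L^3(\Omega,\Rtt)$ with $\|H\|_{L^3}\leq 1$ and write $H=G+\Mant$ as in Remark~\ref{r:sub}, with $G\in\X_3$ and $\|G\|_{L^3}\leq 2$. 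Since $v_k\in\Rort$, Remark~\ref{r:sub} gives $\int_\Omega H\cdot Dv_k\,dx = \int_\Omega G\cdot Dv_k\,dx$, and combining with H\"older's inequality and the estimate on $w_G$ gives $|\int_\Omega H\cdot Dv_k\,dx|\leq 2c\|F_k\|_{\Ltm}$. Taking the supremum over such $H$ bounds $\|Dv_k\|_{\Ltm}$, and the second Korn inequality \eqref{f:korn} (with the same proof as in the $L^2$ case, valid for functions in $\Rort$) upgrades this to the full $W^{1,3/2}$ norm.

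With the uniform bound in hand, I extract a weak limit $v_k\rightharpoonup u$ in $W^{1,3/2}$, with $u\in\Rort$ by weak closure. Passing to the limit in the duality identity (left-hand side: the fixed $G\in L^3$ paired with $Dv_k\rightharpoonup Du$ in $\Ltm$; right-hand side: $F_k\to F$ in $\Ltm$ paired with the fixed $Dw_G\in L^3$) shows that $u$ is a duality solution, giving item (1); passing to the limit in the weak formulation of \eqref{f:elliptick} against any $\varphi\in W^{1,3}(\Omega;\R^3)$ yields the distributional equation \eqref{f:distr}, and weak lower semicontinuity of the norm gives the estimate \eqref{f:estw}, completing item (3). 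Uniqueness, which also ensures that the whole sequence $v_k$ converges and that the limit is independent of the approximation (item (2)), follows directly from the duality formulation: if $u_1,u_2$ are two duality solutions in $\Rort$, their difference $u$ satisfies $\int_\Omega G\cdot Du\,dx=0$ for every $G\in\X_3$, which by Remark~\ref{r:sub} extends to every $H\in L^3(\Omega,\Rtt)$, forcing $Du=0$ and hence $u=0$ since $\Omega$ is connected and $u\in\Rort$. The main technical obstacle is the uniform $W^{1,3/2}$ estimate in the second paragraph: reconciling Theorem~\ref{t:reg}, which only applies to forcings in $\X_3$, with testing against arbitrary elements of $L^3$ is precisely what Remark~\ref{r:sub} is designed to handle, and the argument depends crucially on both $v_k\in\Rort$ and the symmetry of $\C$.
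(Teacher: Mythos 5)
Your proposal is correct and follows essentially the same route as the paper: approximate $F$ by $F_k\in\X_3$, derive the duality identity $\int_\Omega G\cdot Dv_k\,dx=\int_\Omega F_k\cdot Dw_G\,dx$ from the symmetry of $\C$, obtain the uniform $L^{3/2}$ bound on $Dv_k$ by pairing with arbitrary $H\in L^3$ via Remark~\ref{r:sub} and Theorem~\ref{t:reg}, pass to the weak limit, and prove uniqueness by extending the duality test class from $\X_3$ to all of $L^3$. The only cosmetic difference is that you invoke the Korn inequality to pass from the gradient bound to the full $W^{1,3/2}$ norm, where the Poincar\'e--Wirtinger inequality (using $\int_\Omega v_k\,dx=0$) is the tool actually needed, since you already control the full gradient rather than only its symmetric part.
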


\begin{proof}
For any given $G\in \X_3$, let $v\in W^{1,3}(\Omega; \R^3)$ be the 
solution to \eqref{f:elliptic} with right hand side $\dive G$. 
Let $v_k$ be the
solutions  to \eqref{f:elliptick}.

Since  both $v$ and $v_k$ are variational solutions, by the symmetry assumption 
on $\C$, we obtain that the equality
\begin{equation}\label{f:duak}
\int_{\Omega}F_k \cdot Dv \, dx =
\int_{\Omega}\C Dv_k \cdot Dv \, dx =\int_{\Omega}\C Dv \cdot Dv_k \, dx
= \int_{\Omega}G \cdot Dv_k \, dx
\end{equation}
holds for every $k\in\N$, and hence 
we get the estimate
\begin{equation}\label{f:duak2}
\left|\int_{\Omega}G \cdot Dv_k \, dx \right|\leq 
\|F_k\|_{\Ltm} \|v\|_{W^{1,3}(\Omega;R^3)} \leq M 
\|G\|_{L^{3}(\Omega;R^3)}
\end{equation}
for every $G\in \X_3$.
Using the fact that the sequence $F_k$ is equibounded in $\Ltm$, the 
regularity estimate \eqref{f:regest} for $v$, and Remark \ref{r:sub},  we 
conclude that
\[
\|Dv_k\|_{{L^{3/2}(\Omega;R^3)}}=
\sup_{G\in L^{3}(\Omega;R^3)} \frac{1}{\|G\|_{L^{3}(\Omega;R^3)}}
\left| 
\int_{\Omega}G \cdot Dv_k \, dx \right|
\leq M,
\] 
so that there exists a subsequence (still denoted by $v_k$) converging to a 
function $u$ in the weak topology of $W^{1,3/2}(\Omega;\R^3)$.

Since $u$ is the weak limit of distributional solutions, it follows that 
it is also a distributional solution, while the fact that $u$ is a duality 
solution follows passing to 
the limit in \eqref{f:duak} as $k 
\to \infty$. Moreover, since $W^{1,3/2}(\Omega;\R^3)\cap\Rort$ is a weakly closed subspace of  
$W^{1,3/2}(\Omega;\R^3)$, we also obtain that $u\in\Rort$.

The estimate \eqref{f:estw} follows from the very definition of duality 
solution. Specifically  as above we have
\[
\left|\int_{\Omega}G \cdot Du \, dx \right| \leq  \|F\|_{\Ltm} 
\|v\|_{W^{1,3}(\Omega;R^3)} \leq c \|F\|_{\Ltm} 
\|G\|_{L^{3}(\Omega;R^3)}
\]
for every $G\in \X_{3}$, and hence, by Remark \ref{r:sub},
for every $G\in L^{3}(\Omega;R^3)$, which gives  \eqref{f:estw}.

It remains to show that the duality solution is unique in $W^{1,3/2}(\Omega;\R^3)\cap \Rort$. 
Suppose that both
$u$ and $w$ belong to  $\Rort$ and satisfy \eqref{f:dual2}.
Then we have
\[
\int_{\Omega}G [Du-Dv]\, dx = 0, \qquad \forall G\in \X_3,
\]
and, by Remark \ref{r:sub},
\[
\int_{\Omega}G [Du-Dv]\, dx = 0, \qquad \forall G\in L^{3}(\Omega;R^3).
\]
This implies that $u=v$ in $\Omega$.
In particular, we recover that the whole sequence $v_k$ weakly converges to the 
solution $u$ of \eqref{f:dual2}. 

Finally, if $(\widetilde{F}_k)\subseteq L^3(\Omega; \Rtt)$ is another sequence 
converging to $F$ in $\Ltm$, the previous arguments show that 
the sequence $(\widetilde{v}_k)$ of solutions of the problems with forcing 
term  $\widetilde{F}_k$ converge to the duality solution $u$ of 
\eqref{f:distr}, which turns out to be the unique 
distributional solution of problem \eqref{f:distr} that can be obtained by 
approximation. 
\end{proof}

As a consequence of Propositions \ref{p:betamu} and \ref{p:dual}, we obtain the 
existence result in elasticity stated in Theorem \ref{t:ex}

\begin{proof}[Proof of Theorem \ref{t:ex}]
Let  $\beta^\mu \in \Ltm$ be as in Theorem \ref{p:betamu}. 
Since $F=\C\bmu$ belongs to $\Ltm$ and satisfies the compatibility condition 
\eqref{f:compcond}, by Theorem \ref{p:dual} there exists $u$ duality solution
to the problem
\[
\begin{cases}
-\dive (\C(x) D u)=\dive (\C \beta^\mu), & \text{in}\ \Omega, \\
\C Du \cdot n =-\C\beta^\mu \cdot n, & \text{on}\ \partial\Omega. 
\end{cases}
\]
The field $\beta=\bmu+Du$ provides a distributional solution to 
\eqref{f:mainpb}. 
The uniqueness (up to constant antisymmetric matrices) then follows from the 
linearity of the problem. 

Finally, by Theorem \ref{p:betamu}(ii), \eqref{f:estw}, and the boundedness 
of the coefficients $\C$, we get
\[
\|\beta\|_{\Ltm} \leq \|\bmu\|_{\Ltm}+ \|Du\|_{\Ltm}\leq C |\mu|(\Omega).
\]
In particular, $\beta \in L^1(\Omega, \Rtt)$, and
\[
|{\overline{\beta}}^a|=
\left|\frac{1}{2|\Omega|}\int_\Omega (\beta-\beta^T)\, dx\right| \leq C
\|\beta\|_{L^1(\Omega, \Rtt)} \leq C |\mu|(\Omega),
\]
and estimate \eqref{f:stimab} follows. 
\end{proof}

\begin{rk}
The unique solution $\beta$ to problem \eqref{f:mainpb} admits infinitely many
representation of the form $\beta=\bmu+Du$, depending on the choice of $\bmu$.
\end{rk}

\section{$G$-convergence for problems in elasticity}\label{s:gconv}

We are now interested on the behaviour of the solutions to \eqref{f:mainpb}
corresponding to varying operators. In the framework of elliptic systems, the 
convergence of solutions is encoded in the notion of $G$-convergence (see, 
e.g. \cite{JKO}, \cite{DGS}, \cite{Sp}, \cite{MT}).  

\begin{Definition}
A sequence of tensor--valued functions $(\C_h)\in \E(c_0,c_1,\Omega)$ is said to be
G--convergent to  $\C_0\in \E(c_0,c_1,\Omega)$ if for any $f\in W^{-1,2}(\Omega,\R^3)$ 
the solution
$v_h \in W^{1,2}_0(\Omega,\R^3)$ to the Dirichlet problems
\[
\begin{cases}
-\dive (\C_h(x) D v_h)=f, & \text{in}\ \Omega, \\
v_h=0, & \text{on}\ \partial\Omega
\end{cases}
\]
converge in weak topology of  $W^{1,2}_0(\Omega,\R^3)$, as $h\to 0$, to the 
solution $v_0$
of the problem 
\[
\begin{cases}
-\dive (\C_0(x) D v_0)=f, & \text{in}\ \Omega, \\
v_0=0, & \text{on}\ \partial\Omega.
\end{cases}
\]

\end{Definition}

The main properties of $G$-convergence are the following (see, e.g., 
\cite{JKO}, Section 12.2).

\begin{teor}\label{t:gcprop}\mbox{}
\begin{itemize}
\item[(i)] The $G$-limit $\C_0$ is uniquely defined.
\item[(ii)] Let $(\C_h)$ $G$-converging to
$\C_0$ and let $w_h\in W^{1,2}(\Omega,\R^3)$ be such that $-\dive (\C_h D 
w_h)=g$ in $W^{-1,2}(\Omega,\R^3)$. 
If $w_h$ converge to $w$ weakly in  $W^{1,2}(\Omega,\R^3)$, then $\C_h D w_h$ 
converge to
$\C_0 D w$ weakly in  $L^2$.
\item[(iii)] The class $\E(c_0,c_1,\Omega)$ is compact
with respect to $G$-convergence;
\item[(iv)]  The $G$-limit $\C_0$ satisfies the two-sided estimate of Voigt-Reiss:
\[
\left(\lim_{h \to 0} (\C_h)^{-1}\right)^{-1}\leq \C_0 \leq \lim_{h \to 0} \C_h,
\]  
where the limits are understood in the sense of weak convergence in 
$L^2$. 
\end{itemize}
\end{teor}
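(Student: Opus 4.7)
The four claims are the standard package of properties for $G$-convergence, and my plan is to establish them via Tartar's compensated compactness method of oscillating test functions, adapted from the scalar setting to linear elasticity.

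For (iii) and (i) together I would first fix a countable dense set $\{f_j\} \subset W^{-1,2}(\Omega,\R^3)$ and consider the corresponding Dirichlet solutions $v_h^j \in W^{1,2}_0(\Omega,\R^3)$ associated to $\C_h$. Uniform ellipticity from $\C_h \in \E(c_0,c_1,\Omega)$ together with the Korn inequality \eqref{f:korn} gives the a priori bound $\|v_h^j\|_{W^{1,2}_0} \le C \|f_j\|_{W^{-1,2}}$, so a diagonal extraction produces a subsequence along which $v_h^j \rightharpoonup v^j$ weakly in $W^{1,2}_0$ and the fluxes $\sigma_h^j := \C_h D v_h^j \rightharpoonup \sigma^j$ weakly in $L^2$. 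The aim is then to construct a single $\C_0 \in \E(c_0,c_1,\Omega)$ for which $\sigma^j = \C_0 D v^j$ for every $j$; density and linearity then promote this to every $f \in W^{-1,2}$ and yield (iii). Uniqueness (i) follows because varying $f$ through the dense family recovers $\C_0 \xi$ for a.e.\ $x$ and every $\xi \in \S$, so any two candidate $G$-limits must coincide.

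The identification of $\C_0$, which is also the content of (ii), is the crux of the argument and is where I expect the main technical difficulty. Here I would use a div--curl style compensated compactness: given two admissible sequences of pairs $(v_h^\alpha, \sigma_h^\alpha)$ with $\dive \sigma_h^\alpha$ bounded in $H^{-1}$, one has $\sigma_h^\alpha \cdot Dv_h^\beta \rightharpoonup \sigma^\alpha \cdot Dv^\beta$ in the sense of distributions. Applied to test sequences built from the local correctors associated to affine boundary data $\xi x$, $\xi \in \S$, and exploiting the elastic symmetry $a^{hk}_{ij} = a^{ij}_{hk}$ to swap the roles of $\alpha$ and $\beta$, this extracts a well-defined symmetric linear map $\C_0$ sending limit strains to limit fluxes, and an inspection of the quadratic form $\C_0 \xi \cdot \xi$ on affine test functions, combined with the a priori bounds, shows $\C_0 \in \E(c_0,c_1,\Omega)$. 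For (ii), feeding an arbitrary $w_h \rightharpoonup w$ with $-\dive(\C_h D w_h) = g$ in $H^{-1}$ into the same div--curl identity against these correctors identifies $\C_h D w_h \rightharpoonup \C_0 D w$ weakly in $L^2$.

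Finally, for the Voigt--Reuss bounds in (iv), I would test against affine displacements $\xi x$ with $\xi \in \S$. On one hand, the constant fields $\xi$ satisfy trivially $\dive(\C_h \xi) = 0$ only in very special cases, but pairing $\C_h D v_h \cdot \xi$ with the weak-$L^2$ limit of $\C_h$ and passing to the limit using the corrector identity yields the upper (Voigt) bound $\C_0 \xi \cdot \xi \le (\operatorname*{w-lim} \C_h)\xi\cdot\xi$; dually, writing the problem in terms of stresses and using the ellipticity and weak convergence of $\C_h^{-1}$ on equilibrated locally constant stress fields gives the lower (Reuss) bound. Throughout, the elasticity-specific complications (symmetry of $\C$ and the need to quotient by $\mathcal{R}$) are accommodated by restricting the test strains to $\S$ and working modulo rigid motions, but do not alter the essential scheme from the scalar case.
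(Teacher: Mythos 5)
The paper does not actually prove Theorem~\ref{t:gcprop}: it quotes the whole package as known and refers to \cite{JKO}, Section 12.2 (equivalently \cite{MT}) for the proofs. Your outline --- diagonal compactness for a dense family of data, identification of the limit operator via the div--curl lemma applied to correctors for affine data $\xi x$ with $\xi\in\S$, and the energy/complementary-energy argument for the Voigt--Reuss bounds --- is precisely the standard Tartar oscillating-test-function argument carried out in those references, so it is consistent with the paper; the only detail worth fixing is that for the Dirichlet problems defining $G$-convergence the a priori bound uses the first Korn inequality on $W^{1,2}_0(\Omega,\R^3)$ rather than the second Korn inequality \eqref{f:korn}, which is stated on $\Rort$.
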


By Theorem \ref{t:gcprop}(iii) every sequence $\C_h$ admits a $G$-converging 
subsequence. In what follows we only consider $G$-converging sequences 
$\C_h\in \E(c_0,c_1,\Omega)$ with coefficients satisfying the following uniform $VMO$ 
estimate: there exists a decreasing function $\omega\colon [0,1] \to \R$ such 
that $\lim_{r \to 0}\omega(r)=0$, and for every $r\in [0,1]$
\begin{equation}\label{f:uvmo}
\sup_{B_\rho\subseteq \Omega,\ \rho\leq r} 
\frac{1}{|B_\rho|} \int_{B_\rho} \left| \C_h(s)- \frac{1}{|B_\rho|}
\int_{B_\rho} \C_h(t)\, dt \right|\, ds \leq \omega(r), \qquad \forall h.
\end{equation}

We show that the $G$-convergence implies also the convergence of the solutions 
to elasticity problems.

\begin{teor}\label{t:gconv}
Assume that the sequence $(\C_h)\in \E(c_0,c_1,\Omega)$ satisfies the uniform $VMO$ 
condition \eqref{f:uvmo}. If $(\C_h)$
$G$-converges to  $\C_0$, then for every $\mu\in\M$ the sequence 
$\beta_h$ of solutions to

\begin{equation}\label{f:gconv}
\begin{cases}
-\dive (\C_h(x) \beta_h)=0, & \text{in}\ \Omega, \\
\curl \beta_h =\mu, & \text{in}\ \Omega, \\
\C_h\beta_h \cdot n =0, & \text{on}\ \partial\Omega \\
\end{cases}
\end{equation}

converges weakly in $\Ltm$ to the solution $\beta_0$ to

\begin{equation}\label{f:glim}
\begin{cases}
-\dive (\C_0(x) \beta_0)=0, & \text{in}\ \Omega, \\
\curl \beta_0 =\mu, & \text{in}\ \Omega, \\
\C_0\beta_0 \cdot n =0, & \text{on}\ \partial\Omega. \\
\end{cases}
\end{equation}
\end{teor}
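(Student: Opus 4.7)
The plan is to follow the strategy of the existence proof and reduce the convergence of $\beta_h$ to the convergence of the duality solutions of the associated Neumann problems. A preliminary observation is that the Bourgain--Brezis field of Theorem \ref{p:betamu} depends only on $\mu$ and not on $\C_h$, so we may fix once and for all one $\bmu \in \Ltm$ with $\curl\bmu = \mu$ and $\|\bmu\|_{\Ltm} \le c|\mu|(\Omega)$, and write $\beta_h = \bmu + Du_h$, where $u_h \in W^{1,3/2}(\Omega;\R^3) \cap \Rort$ is the duality solution to the Neumann problem with coefficient $\C_h$ and forcing $\C_h \bmu \in \Ltm$. Thanks to the uniform VMO assumption \eqref{f:uvmo}, the constant in \eqref{f:regest}, and hence in \eqref{f:estw}, can be chosen independent of $h$, giving $\|u_h\|_{W^{1,3/2}(\Omega;\R^3)} \le C |\mu|(\Omega)$. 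Extract a subsequence with $u_h \rightharpoonup u_0$ weakly in $W^{1,3/2}$; since $\Rort$ is weakly closed, $u_0 \in \Rort$, and $\beta_h \rightharpoonup \bmu + Du_0$ weakly in $\Ltm$.

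Next I would identify $u_0$ as the duality solution to the limit problem by passing to the limit in the duality identity tested against an arbitrary $G \in \X_3$. For each such $G$, let $v_h \in W^{1,3}(\Omega;\R^3) \cap \Rort$ be the variational solution to the Neumann problem with coefficient $\C_h$ and forcing $\dive G$. The uniform VMO condition yields $\|v_h\|_{W^{1,3}(\Omega;\R^3)} \le C\|G\|_{L^3}$, hence up to a subsequence $v_h \rightharpoonup v_\ast$ weakly in $W^{1,3}$, and in particular in $W^{1,2}$. By Theorem \ref{t:gcprop}(ii) applied to the equation $-\dive(\C_h Dv_h) = \dive G$, one has $\C_h Dv_h \rightharpoonup \C_0 Dv_\ast$ weakly in $L^2$. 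Passing to the limit in the weak formulation of the Neumann problem identifies $v_\ast$ as the unique $\Rort$-solution $v_0$ of the limit Neumann problem with coefficient $\C_0$ and forcing $\dive G$. Because $\C_h Dv_h$ is uniformly bounded in $L^3$, this weak $L^2$ convergence upgrades to weak $L^3$ convergence.

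Now pass to the limit in the duality identity: by the symmetry of $\C_h$,
\[
\int_\Omega G \cdot Du_h\, dx = \int_\Omega \C_h\bmu \cdot Dv_h\, dx = \int_\Omega \bmu \cdot \C_h Dv_h\, dx.
\]
The left-hand side tends to $\int_\Omega G \cdot Du_0\, dx$ by weak $W^{1,3/2}$ convergence of $u_h$ paired against $G \in L^3$. The right-hand side tends to $\int_\Omega \bmu \cdot \C_0 Dv_0\, dx = \int_\Omega \C_0 \bmu \cdot Dv_0\, dx$ by pairing the weakly $L^3$-convergent sequence $\C_h Dv_h$ with the fixed $L^{3/2}$ function $\bmu$. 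This is precisely the duality identity that characterises $u_0$ as the duality solution to the Neumann problem with coefficient $\C_0$ and forcing $\C_0 \bmu$. By the uniqueness in Theorem \ref{p:dual}, $u_0$ is uniquely determined, so $\beta_0 := \bmu + Du_0$ solves \eqref{f:glim}; uniqueness of the limit forces the whole original sequence $(\beta_h)$ to converge weakly in $\Ltm$ to $\beta_0$, not only a subsequence.

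The main obstacle is the second step: the $G$-convergence hypothesis is framed for Dirichlet problems with $W^{-1,2}$ data, but what we need is to transfer it to the Neumann problems associated with $L^3$ test data of the duality framework, and moreover to upgrade the resulting weak $L^2$ convergence of $\C_h Dv_h$ to weak $L^3$ convergence, so that the pairing against the Bourgain--Brezis field $\bmu \in L^{3/2}$ is legitimate. The uniform VMO assumption is crucial to both tasks: it yields a uniform $W^{1,3}$ bound on $v_h$, which provides the required $L^3$ bound on $\C_h Dv_h$ and makes the upgrade automatic, and it is what keeps the duality solutions $u_h$ themselves equibounded in $W^{1,3/2}$.
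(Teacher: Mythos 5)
Your proposal is correct and follows essentially the same route as the paper: fix a single Bourgain--Brezis field $\bmu$, decompose $\beta_h=\bmu+Du_h$, use the uniform VMO bound to get equiboundedness of $u_h$ in $W^{1,3/2}$ and of $v_h$ in $W^{1,3}$, and pass to the limit in the duality identity via Theorem \ref{t:gcprop}(ii). Your explicit upgrade of the weak $L^2$ convergence of $\C_h Dv_h$ to weak $L^3$ convergence, needed to pair against $\bmu\in L^{3/2}$, is a point the paper leaves implicit, and is handled correctly.
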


\begin{proof}
Given $\bmu$ as in Proposition \ref{p:betamu}, let $\ue$ be the duality solution
to the problem
\[
\begin{cases}
-\dive (\C_h(x) D \ue)=\dive (\C_h \beta^\mu), & \text{in}\ \Omega, \\
\C_h D\ue \cdot n =-\C_h\beta^\mu \cdot n, & \text{on}\ \partial\Omega, 
\end{cases}. 
\]
Then $\ue \in W^{1,3/2}(\Omega,\Rtt)\cap\Rort$, 
there exists $c>0$, independent of $h$, such that
\begin{equation}\label{f:dualest}
\| \ue\|_{W^{1,3/2}(\Omega;\R^3)} \leq c \|\beta^\mu\|_{\Ltm},
\end{equation}
and 
\begin{equation}\label{f:dualhom}
\int_{\Omega}G D\ue\, dx = \int_{\Omega}\C_h\beta^\mu(x) D\ve\, dx
\qquad \forall\, G\in \X_3,
\end{equation}
where $\ve\in W^{1,3}(\Omega,\R^3)\cap \Rort$ is the solution to
\[
\begin{cases}
-\dive (\C_h(x) D \ve)=\dive G, & \text{in}\ \Omega, \\
\C_h D\ve \cdot n =-G\cdot n, & \text{on}\ \partial\Omega.
\end{cases}. 
\]
On the other hand, given $G\in \X_3$, by \eqref{f:regest} there exists $C>0$,
independent of $h$, such that
\[
\| \ve\|_{W^{1,3}(\Omega;\R^3)} \leq C \|G\|_{L^3(\Omega; \Rtt)},
\]
so that $\ve$ converges, up to a subsequence, to a function $v_0$ in the weak 
topology of $W^{1,3}(\Omega;\R^3)$.
Then, by Theorem \ref{t:gcprop}(ii) the sequence $(\C_hD v_h)$ converges to 
$\C_0D v_0$, and 
$v_0$ is the solution to the boundary value problem
\begin{equation}\label{f:limp}
\begin{cases}
-\dive (\C_0(x) D v_0)=\dive G, & \text{in}\ \Omega, \\
\C_0 D v_0\cdot n=- G \cdot n, & \text{on}\ \partial\Omega.
\end{cases}
\end{equation}
As a matter of fact, the whole sequence $v_h$ converges to $v_0$, due to the 
uniqueness of the solution of the limit problem \eqref{f:limp}.

Finally, by \eqref{f:dualest}, also the sequence $\ue$ converges, up to a 
subsequence, to a function $u_0$ in the weak topology of 
$W^{1,3/2}(\Omega;\R^3)$, and a passage to the limit in \eqref{f:dualhom}
shows that $u_0$ is the duality solution to the problem 
\[ 
\begin{cases}
-\dive (\C_0 D u_0)=\dive (\C_0\bmu), & \text{in}\ \Omega, \\
\C_0 u_0\cdot n=-\C_0\bmu \cdot n, & \text{on}\ \partial\Omega.
\end{cases}
\]
The convergence of the solutions $\beta_h$ of the problems \eqref{f:gconv} to 
the solution $\beta_0$ of the limit problem \eqref{f:glim}  now follows from 
the fact that $\beta_h=\bmu+Du_h$.
\end{proof}

As an example we finally observe that in the case of periodic rapidly 
oscillating  coefficients the effective behaviour of the corresponding 
incompatible fields is described by the homogenized effective tensor 
characterized by  the homogenization procedure of the elliptic systems in 
elasticity.

Specifically, if $Y$ denotes the reference cell $Y=[0,1]^3$, let 
$\C=\C(y)$ be a 
$Y$--periodic tensor valued function in $\E(c_0,c_1,Y)$ satisfying 
\eqref{f:VMO}, and let us consider
the asymptotic behavior as $\e \to 0$ of the solutions to 
the linear 
problems with rapidly oscillating periodic coefficients 
$\Ce(x)=\C(\frac{x}{\e})$ 
\begin{equation}\label{f:oscill}
\begin{cases}
-\dive (\Ce(x) \beta_\e)=0, & \text{in}\ \Omega, \\
\curl \beta_\e =\mu, & \text{in}\ \Omega, \\
\Ce\beta_\e\ \cdot n =0, & \text{on}\ \partial\Omega \\
\end{cases}
\end{equation}
where $\mu\in\M$.

It is well known (see, e.g. \cite{Shen}) that the sequence $(\Ce)$ is 
$G$-convergent to a effective operator with constant coefficients $\Chom$, and 
that 
$\Chom\in \E(c_0,c_1,\Omega)$. 

Moreover, under the assumption that $\C$ is $VM0$, by Theorem 4.3.1 in 
\cite{Shen}, for every $G\in \X_3$ the 
variational solution $v_\e\in W^{1,3}(\Omega,\R^3)$ to
\begin{equation}\label{f:oscillv}
\begin{cases}
-\dive (\Ce(x) Dv_\e)=\dive G, & \text{in}\ \Omega, \\
\Ce Dv_\e\ \cdot n =-G \cdot n, & \text{on}\ \partial\Omega \\
\end{cases}
\end{equation}
satisfies the estimate
\[
\|v_\e\|_{W^{1,3}(\Omega,\R^3)} \leq C \|G\|_{L^3(\Rtt)}
\]
with a constant $C>0$ independent of $\e$. Hence, following the lines of the 
proof of Theorem \ref{p:dual}, we obtain that the duality solutions to the 
problems 
\[
\begin{cases}
-\dive (\Ce(x) D u_e)=\dive F, & \text{in}\ \Omega, \\
\Ce Du_e \cdot n =-F \cdot n, & \text{on}\ \partial\Omega, 
\end{cases}. 
\]
satisfy the estimate
\[
\| u_e\|_{W^{1,3/2}(\Omega;\R^3)} \leq c \|F|_{\Ltm},
\]
with a constant $C>0$ independent of $\e$.

In conclusion, following the lines of Theorem \ref{t:gconv}
we obtain that, if $\C=\C(y)$ is a 
$Y$--periodic tensor valued function in $\E(c_0,c_1,Y)$ satisfying 
\eqref{f:VMO}, then for every $\mu\in\M$, the solutions $\beta_\e$ to 
\eqref{f:oscill} converge weakly in $\Ltm$ to the 
solution $\beta$ to the problem
\[
\begin{cases}
-\dive (\Chom \beta)=0, & \text{in}\ \Omega, \\
\curl \beta =\mu, & \text{in}\ \Omega, \\
\Chom\beta \cdot n =0, & \text{on}\ \partial\Omega. \\
\end{cases}
\] 



\begin{thebibliography}{11}

\bibitem{ADN}
S.\ Agmon, A.\ Douglis, L.\ Nirenberg. 
{Estimates near the boundary for solutions of elliptic partial differential 
equations satisfying general boundary conditions II}. 
{\it Comm. Pure Appl. Math.},  {\bf 17} (1964), 35--92.

\bibitem{Bo1}
L.\ Boccardo. 
{Problemi differenziali ellittici e parabolici con dati misure. Conferenza 
generale al Congresso UMI 1995}. 
{\it Boll. Unione Mat. Ital.} {\bf 11 A} (1997), 439--461.

\bibitem{Bo2}
L.\ Boccardo.
{Two semilinear Dirichlet problems ``almost'' in duality}. 
{\it Boll Unione Mat Ital.} {\bf 12} (2019), 349--356. 

\bibitem{BrBu}
J.\ Bourgain, H.\ Brezis.
{New estimates for the elliptic equations and Hodge type systems}.
{\it J.Eur. Math. Soc.}, {\bf 9} (2007), 277--315.

\bibitem{CGO}
S.\ Conti, A.\ Garroni, M.\ Ortiz. 
{The line-tension approximation as the dilute limit of linear-elastic 
dislocations}. 
{\it Arch. Rational Mech. Anal.}, {\bf 218} (2015), 699--755.

\bibitem{DMOP}
G.\ Dal Maso, F.\ Murat, L.\ Orsina, A.\ Prignet. 
{Renormalized solutions of elliptic equations with general measure data}. 
{\it Ann. S.N.S., Cl. Sci.}, {\bf 28} (1999), 741--808.

\bibitem{DGS}
E.\ De Giorgi, S.\ Spagnolo.
{Sulla convergenza degli integrali dell?energia per operatori
ellittici del secondo ordine}. 
{\it Boll. Un. Mat. Ital.} {\bf 8} (1973), 391--411. 

\bibitem{JKO}
V.\ V.\ Jikov, S.\ M.\ Kozlov, O.\ A.\ Oleinik.
{Homogenization of Differential Operators and Integral Functionals}.
{\it Springer-Verlag, Berlin Heidelberg (1994)}

\bibitem{MO}
A.\ Malusa, L.\ Orsina.
{Existence and regularity results for relaxed Dirichlet problems with
measure data}.
{\it Ann. Mat. Pura Appl.}{\bf 170} (1996), pp. 83--97.

\bibitem{MT}
F.\ Murat, L.\ Tartar
{H-Convergence}
{\it Topics in the Mathematical Modelling of Composite Materials. Progress in 
Nonlinear Differential Equations and Their Applications, vol 31. Birkhäuser, 
Boston (1997)}

\bibitem{OSY}
O.\ A.\ Oleinik, A.\ S.\ Shamaev, G.\ A.\ Yosifian.
{Mathematical problems in elasticity and homogenezitation}
{\it Studies in Mathematics and its Applications, v. 26, North Holland, 
Amsterdam (1992)}

\bibitem{OP}
L.\ Orsina, A.\ C.\  Ponce.
{On the nonexistence of Green's function and failure of the strong maximum 
principle}
{\it J. Math. Pures Appl.} {\bf 134} (2020), pp. 72--121.

\bibitem{Shen}
Z.\ Shen.
{Periodic Homogenization of Elliptic Systems}
{\it Advances in Partial Differential Equations, Birkh\"auser (2018)}

\bibitem{Sp}
S.\ Spagnolo.
{Sulla convergenza di soluzioni di equazioni paraboliche ed ellittiche}
{\it Ann. Scuola Norm. Sup. Pisa Cl. Sci.} {\bf 22} (1968), 577--597. 


\bibitem{St}
G.\ Stampacchia. 
{Le probléme de Dirichlet pour les éequations elliptique du second ordre à 
coefficients discontinus}. 
{\it Ann. Inst. Fourier (Grenoble)}, {\bf 15} (1965), 189--258.
\end{thebibliography}
\end{document}